\numberwithin{equation}{section}
                        \theoremstyle{plain}
\newcommand\no[1]{}
\newtheorem{theorem}{Theorem}[section]
\newtheorem{lemma}[theorem]{Lemma}
\newtheorem{corollary}[theorem]{Corollary}
\newtheorem{proposition}[theorem]{Proposition}
\theoremstyle{definition}
\newtheorem{remark}[theorem]{Remark}
\def\BC{\mathbb C}
\def\BZ{\mathbb Z}
\def\BT{\mathbb T}
\def\CM{\mathcal M}
\def\CT{\mathcal T}
\def\la{\langle}
\def\ra{\rangle}
\DeclareMathOperator{\tr}{\mathrm tr}
\def\be { \begin{equation} }
\def\ee { \end{equation} }
\begin{document}

\title[ATAP of genus one two-bridge knots]{Adjoint Twisted Alexander polynomials of genus one two-bridge knots}

\author[Anh T. Tran]{Anh T. Tran}
\address{Department of Mathematical Sciences, The University of Texas at Dallas, Richardson, TX 75080, USA}
\email{att140830@utdallas.edu}

\begin{abstract}
We give explicit formulas for the adjoint twisted Alexander polynomial and the nonabelian Reidemeister torsion of genus one two-bridge knots.
\end{abstract}

\thanks{2010 {\em Mathematics Classification:} Primary 57N10. Secondary 57M25.\\
{\em Key words and phrases: adjoint action, genus one, Reidemeister torsion, Riley polynomial, twisted Alexander polynomial, two-bridge knot.}}

\maketitle

\section{Introduction}

The twisted Alexander polynomial, a generalization of the Alexander polynomial \cite{Al}, was introduced by Lin \cite{Li} for knots in the 3-sphere and by Wada \cite{Wa} for finitely presented groups. Twisted Alexander polynomials have been extensively studied in recent years, see the survey papers \cite{FV, Mo} and references therein. The adjoint action, $\text{Ad}$, is the conjugation on the Lie algebra $sl_2(\BC)$ by the Lie group $SL_2(\BC)$. Suppose $K \subset S^3$ is a knot and $\pi_1(K)$ its knot group. For each representation $\rho$ of $\pi_1(K)$ into $SL_2(\BC)$, the composition $\text{Ad} \circ \rho$ is a representation of $\pi_1(K)$ into $SL_3(\BC)$ and hence, by \cite{Wa}, one can define a rational function $\Delta^{\text{Ad} \circ \rho}_{K}(t)$, called the adjoint twisted Alexander polynomial associated to $\rho$. The polynomial $\Delta^{\text{Ad} \circ \rho}_{K}(t)$ was calculated for the figure eight knot by Dubois and Yamaguchi \cite{DY}, and for torus knots and twist knots by the author \cite{Tr2}. In this paper we compute the adjoint twisted Alexander polynomial for genus one two-bridge knots, a class of two-bridge knots which includes twist knots.

Let $J(k,l)$ be the knot/link in Figure 1, where $k,l$ denote 
the numbers of half twists in the boxes. Positive (resp. negative) numbers correspond 
to right-handed (resp. left-handed) twists. 
Note that $J(k,l)$ is a knot if and only if $kl$ is even. It is known that the set of all genus one two-bridge knots  is the same as the set of all the knots $J(2m,2n)$ with $mn \not= 0$, see e.g. \cite[page 203]{BZ}. The knots $J(2,2n)$ are known as twist knots. For more information on $J(k,l)$, see \cite{HS}.

\begin{figure}[th]
\centerline{\psfig{file=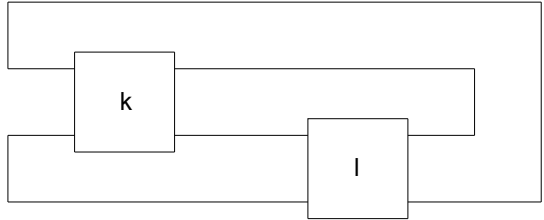,width=3.5in}}
\vspace*{8pt}
\caption{The knot/link $J(k,l)$. }
\end{figure} 

From now on we fix $K=J(2m,2n)$ with $mn \not=0$. The knot group of $K$ has a presentation $\pi_1(K)= \la a, b \mid w^na=bw^n \ra$ where $a,b$ are meridians and $w=(ba^{-1})^m(b^{-1}a)^m$. A representation $\rho:  \pi_1(K) \to SL_2(\BC)$ is called nonabelian if 
the image of $\rho$ is a nonabelian subgroup of $SL_2(\BC)$. Suppose $\rho: \pi_1(K) \to SL_2(\BC)$ is a nonabelian representation. Up to conjugation, we may assume that $$\rho(a) = \left[ \begin{array}{cc}
s & 1 \\
0 & s^{-1} \end{array} \right] \quad \text{and} \quad \rho(b) = \left[ \begin{array}{cc}
s & 0 \\
2-y & s^{-1} \end{array} \right]$$ where $s \not=0$ and $y \not= 2$ satisfy the Riley equation $\phi_K(s,y)=0$, see \cite{Ri, Le}. Note that $y=\tr \rho(ab^{-1})$.  The polynomial $\phi_K(s,y)$ is explicitly given in Proposition \ref{Riley} by
$$\phi_K(s,y) = S_{n-2}(z) - \big[ 1  - (y+2-x^2)S_{m-1}(y) \big( S_{m-1}(y)- S_{m-2}(y) \big) \big] S_{n-1}(z),$$
where $x :=\tr \rho(a)=s + s^{-1}$ and $$z:= \tr \rho(w)=2S^2_{m}(y) - 2yS_m(y)S_{m-1}(y) + \big(  (2x^2-2)(1-y) + y^2 \big) S^2_{m-1}(y).$$Here $S_k(v)$'s are the Chebychev polynomials of the second kind defined by $S_0(v)=1$, $S_1(v)=v$ and $S_{k}(v) = v S_{k-1}(v) - S_{k-2}(v)$ for all integers $k$.

 Let
\begin{eqnarray*}
A &=& 2n \Big( \frac{2m +2 S_m(y)S_{m-1}(y) - y S^2_{m-1}(y)}{y+2} \Big),\\
B &=& -4 \big( S_m(y) + S_{m-1}(y) \big)^2  \Big( \frac{m S_{m}(y) + (m+1) S_{m-1}(y)}{(y+2) S_{m-1}(y)} \Big)\\
&& + \, 6m S^2_m(y)+(4+4m-4n-2my) S_{m}(y) S_{m-1}(y)\\
&& + \,  (2+2m-4mn-y+2ny+2mny) S^2_{m-1}(y) ,\\
C &=& - (2mn-1)  \big( 2S_m(y) - yS_{m-1} (y) \big)^2.
\end{eqnarray*}
The adjoint twisted Alexander polynomial of $K=J(2m,2n)$ is computed as follows.

\begin{theorem} \label{thm-atp}
Suppose $\rho: \pi_1(K) \to SL_2(\BC)$ is a nonabelian representation. We have
\begin{eqnarray*}
\Delta_{K}^{Ad \circ \rho}(t) &=& \frac{t-1}{(y+2-x^2) \big( 4 - x^2  + (y-2)(y+2-x^2) S^2_{m-1}(y) \big)} \\
&& \times \left( mn t^2 - \frac{Ax^4 + Bx^2+C}{4   + (y-2)(y+2-x^2) S^2_{m-1}(y)} \, t + mn\right).
\end{eqnarray*}
\end{theorem}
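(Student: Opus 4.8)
The plan is to compute $\Delta_K^{Ad\circ\rho}(t)$ straight from Wada's definition applied to the one-relator presentation $\pi_1(K)=\la a,b\mid r\ra$, $r=w^n a w^{-n}b^{-1}$ (which encodes $w^na=bw^n$), and then to simplify the resulting expression modulo the Riley equation. Write $\alpha\colon\pi_1(K)\to\BZ=\la t\ra$ for the abelianization, $\alpha(a)=\alpha(b)=t$, and let $\Phi\colon\BZ[\pi_1(K)]\to M_3(\BC[t^{\pm1}])$ be the ring homomorphism $g\mapsto t^{\alpha(g)}Ad(\rho(g))$. Then, up to a unit,
\[
\Delta_K^{Ad\circ\rho}(t)=\frac{\det\Phi\!\big(\tfrac{\partial r}{\partial b}\big)}{\det\Phi(a-1)} .
\]
Since $\rho(a)$ has eigenvalues $s^{\pm1}$, the matrix $Ad(\rho(a))$ has eigenvalues $s^{\pm2}$ and $1$, so the denominator is $\det\big(t\,Ad(\rho(a))-I\big)=(t-1)\big(t^2-(x^2-2)t+1\big)$, independent of $m$ and $n$; the whole task is thus to evaluate the $3\times3$ determinant in the numerator.

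The first step is to reduce the Fox derivative. The product rule, the identity $\tfrac{\partial u^{-1}}{\partial b}=-u^{-1}\tfrac{\partial u}{\partial b}$, and the relation $w^naw^{-n}=b$ give
\[
\frac{\partial r}{\partial b}=(1-b)\Big(\sum_{i=0}^{n-1}w^i\Big)\frac{\partial w}{\partial b}-1
\]
(with the evident sign change when $n<0$). Setting $u=ba^{-1}$, $v=b^{-1}a$, so $w=u^mv^m$, and using $\tfrac{\partial u}{\partial b}=1$, $\tfrac{\partial v}{\partial b}=-b^{-1}$, one further obtains
\[
\frac{\partial w}{\partial b}=\sum_{i=0}^{m-1}u^i-u^m\Big(\sum_{i=0}^{m-1}v^i\Big)b^{-1}.
\]
Hence $\Phi\big(\tfrac{\partial r}{\partial b}\big)$ is assembled from $Ad(\rho(u))$, $Ad(\rho(v))$, $Ad(\rho(w))$, $\Phi(b)=t\,Ad(\rho(b))$, and the geometric sums $\sum_{i<N}Ad(\rho(u))^i$ and $\sum_{i<N}Ad(\rho(w))^i$.

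The second step is to substitute the Riley matrices. Since $\rho(u)$ and $\rho(v)$ both have trace $y$ and $\rho(w)$ has trace $z$, the Cayley--Hamilton relation $X^k=S_{k-1}(\tau)X-S_{k-2}(\tau)I$ for $X\in SL_2(\BC)$ with $\tr X=\tau$ turns every power of $\rho(u),\rho(v),\rho(w)$ into an explicit $2\times2$ matrix over $\BC[s^{\pm1},y]$ (resp.\ over $\BC[s^{\pm1},y,z]$), while the companion fact that $Ad(M)$ has characteristic polynomial $(\lambda-1)\big(\lambda^2-(\tau^2-2)\lambda+1\big)$ lets one rewrite each $\sum_{i<N}Ad(M)^i$ as a quadratic polynomial in $Ad(M)$ with Chebyshev coefficients. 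Applying $Ad$ and collecting terms, $\Phi\big(\tfrac{\partial r}{\partial b}\big)$ becomes an explicit $3\times3$ matrix over $\BC(s,y)[t^{\pm1}]$ whose determinant has $t$-span at most $6$. Standard duality for twisted Alexander polynomials, applied to $Ad\circ\rho$ (which preserves the Killing form and is therefore self-dual), makes $\Delta_K^{Ad\circ\rho}(t)$ symmetric under $t\mapsto t^{-1}$ up to a unit; since dividing the numerator by $\det\Phi(a-1)=(t-1)(t^2-(x^2-2)t+1)$ leaves a cubic, that cubic must be $t-1$ times a palindromic quadratic, i.e.\ of the form $\frac{t-1}{D}\big(mn\,t^2-E\,t+mn\big)$, the coefficient $mn$ being read off from the extreme coefficient of the cubic.

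The main obstacle is the final identification of $D$ and $E$ with the closed forms in the statement. A priori these come out as unwieldy rational functions of $s$ and $y$, and to see that $D=(y+2-x^2)\big(4-x^2+(y-2)(y+2-x^2)S_{m-1}^2(y)\big)$ and $E=\frac{Ax^4+Bx^2+C}{4+(y-2)(y+2-x^2)S_{m-1}^2(y)}$ with $A,B,C$ as given, one has to reduce modulo the Riley polynomial $\phi_K(s,y)$ of Proposition \ref{Riley} and push through Chebyshev manipulations --- the recursion, the product-to-sum rule $S_jS_k=\sum_{i=0}^{\min(j,k)}S_{j+k-2i}$, and the displayed expression of $z$ in terms of $S_\bullet(y)$ and $x$ --- so as to eliminate every occurrence of $S_\bullet(z)$ in favour of $S_\bullet(y)$, then sort the numerator of $E$ by powers $x^4,x^2,x^0$. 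Concretely I would (i) check that the constant and leading coefficients of the cubic are exactly $mn$, a strong consistency test; (ii) read off $D$ by pairing the visible $(t-1)$-factor with $t^2-(x^2-2)t+1$; (iii) extract the middle coefficient, expand it, repeatedly use $\phi_K(s,y)=0$ to trade $S_{n-1}(z),S_{n-2}(z)$ for polynomials in $y$ and $x$, and collect powers of $x^2$ to recover $A,B,C$. This reduction is routine in principle but computationally heavy; I would first verify it in the twist-knot case $m=1$ against \cite{Tr2} before treating general $m$.
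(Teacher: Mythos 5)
Your proposal follows essentially the same route as the paper: Wada's formula for the one-relator presentation $\la a,b\mid w^naw^{-n}b^{-1}\ra$, a Fox-calculus reduction of the derivative of the relator to geometric sums in $w$, $ba^{-1}$, $b^{-1}a$, explicit Chebyshev-polynomial expressions for $\sum_i (Ad_M)^i$ and $(Ad_M)^p$, and a final determinant evaluation simplified modulo the Riley equation (the paper differentiates with respect to $a$ and divides by $\det\Phi(b-1)$ rather than with respect to $b$, which is immaterial). Like the paper, you leave the decisive symbolic computation that produces $A$, $B$, $C$ and the denominator to machine algebra; your added duality/palindromicity observation correctly constrains the shape of the answer (once the antipalindromic sign is checked) but does not replace that computation.
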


Theorem \ref{thm-atp} generalizes the formula for the adjoint twisted Alexander polynomial of the twist knot $K=J(2,2n)$ in \cite{Tr2}.

It is known that $\Delta^{\text{Ad} \circ \rho}_{K}(t)$ coincides with the nonabelian Reidemeister torsion polynomial $\CT^{\rho}_{K}(t)$ \cite{Ki, KL}. As a consequence of this identification, one can calculate the nonabelian Reidemeister torsion $\BT^{\rho}_{K}$ for any longitude-regular $SL_2(\BC)$-representation $\rho$ of the knot group of $K$ by applying the following formula of Yamaguchi:
\begin{equation} \label{ya}
\BT^{\rho}_{K}=-\lim_{t \to 1} \frac{\CT^{\rho}_{K}(t)}{t-1},
\end{equation} 
see \cite{Ya}. We refer the reader to \cite{Po1, Po2, Du, DHY, DY} for definitions of $\CT^{\rho}_{K}(t)$ and $\BT^{\rho}_{K}$. 

From Theorem \ref{thm-atp} and Formula \eqref{ya}, we have the following.

\begin{corollary} 
Suppose $\rho: \pi_1(K) \to SL_2(\BC)$ is a longitude-regular nonabelian representation. We have
\begin{eqnarray*}
\BT_{K}^{\rho}&=& \frac{1}{(y+2-x^2) \big( 4 - x^2  + (y-2)(y+2-x^2) S^2_{m-1}(y) \big)} \\
&& \times \left( 2mn - \frac{Ax^4 + Bx^2+C}{4   + (y-2)(y+2-x^2) S^2_{m-1}(y)} \right).
\end{eqnarray*}
\end{corollary}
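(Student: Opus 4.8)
The plan is to obtain the torsion formula as an immediate corollary of Theorem~\ref{thm-atp} by passing it through the two facts recalled just above the statement: the identification $\CT^{\rho}_{K}(t)=\Delta^{\text{Ad}\circ\rho}_{K}(t)$ of \cite{Ki,KL} and Yamaguchi's limit formula \eqref{ya}. Chaining these together gives
\[
\BT^{\rho}_{K}=-\lim_{t\to1}\frac{\CT^{\rho}_{K}(t)}{t-1}=-\lim_{t\to1}\frac{\Delta^{\text{Ad}\circ\rho}_{K}(t)}{t-1},
\]
so the whole corollary is reduced to evaluating this one limit, with the explicit expression for $\Delta^{\text{Ad}\circ\rho}_{K}(t)$ furnished by Theorem~\ref{thm-atp}. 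No new geometry or representation theory is needed beyond what has already been established.

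First I would exploit the shape of the formula in Theorem~\ref{thm-atp}: it already presents $\Delta^{\text{Ad}\circ\rho}_{K}(t)$ as the factor $t-1$ sitting alone in the numerator, multiplying a cofactor whose only $t$-dependence is the quadratic $mn\,t^{2}-\frac{Ax^{4}+Bx^{2}+C}{4+(y-2)(y+2-x^{2})S^{2}_{m-1}(y)}\,t+mn$. Dividing by $t-1$ cancels this factor exactly, and the surviving function is regular at $t=1$ because the remaining denominators $(y+2-x^{2})\big(4-x^{2}+(y-2)(y+2-x^{2})S^{2}_{m-1}(y)\big)$ and $4+(y-2)(y+2-x^{2})S^{2}_{m-1}(y)$ are independent of $t$. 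The limit is therefore computed simply by substituting $t=1$ into the cofactor: the quadratic collapses to $2mn-\frac{Ax^{4}+Bx^{2}+C}{4+(y-2)(y+2-x^{2})S^{2}_{m-1}(y)}$, and dividing by the $t$-independent denominator reproduces exactly the bracketed expression in the statement.

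The only point requiring care is the overall normalization. Yamaguchi's formula \eqref{ya} carries a leading sign, while nonabelian Reidemeister torsion is well defined only up to multiplication by a unit $\pm t^{k}$; I would fix the representative of $\Delta^{\text{Ad}\circ\rho}_{K}(t)$ used in Theorem~\ref{thm-atp} so that this sign ambiguity is absorbed and the displayed identity holds as written. Beyond this bookkeeping there is no genuine obstacle: the deduction is a one-line specialization, valid precisely when the limit exists, that is, when the two $t$-independent denominators above are nonzero. This nonvanishing, together with the fact that $t=1$ is a \emph{simple} zero of $\Delta^{\text{Ad}\circ\rho}_{K}(t)$ (visible from the explicit factor $t-1$), is exactly what the longitude-regularity hypothesis on $\rho$ secures, so the formula for $\BT^{\rho}_{K}$ follows at once.
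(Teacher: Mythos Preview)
Your proposal is correct and follows exactly the route the paper takes: the corollary is deduced in one line by combining Theorem~\ref{thm-atp} with the identification $\CT^{\rho}_{K}(t)=\Delta^{\text{Ad}\circ\rho}_{K}(t)$ and Yamaguchi's limit formula~\eqref{ya}, then setting $t=1$ in the surviving cofactor. Your remark on the sign normalization is appropriate, since a literal application of~\eqref{ya} produces the negative of the displayed expression and one must invoke the usual sign ambiguity to match the statement as written.
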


\begin{remark}
(1) The nonabelian Reidemeister torsion $\BT_K^{\rho}$ appears in the volume conjecture of Kashaev and Murakami-Murakami \cite{Ka, MM} which relates the colored Jones polynomial of a knot $K \subset S^3$ and its hyperbolic volume. 

(2) A combinatorial formula for the nonabelian Reidemeister torsion of a hyperbolic two-bridge knot associated to the holonomy representation has recently been given by Ohtsuki and Takata \cite{OT}.
\end{remark}

The paper is organized as follows. In Section \ref{atp} we briefly recall the definition of the adjoint twisted Alexander polynomial. In Section \ref{matrix} we present matrix computations which are needed in the proof of Theorem \ref{thm-atp}. Finally, we prove Theorem \ref{thm-atp} in Section \ref{proof}.

\section{Adjoint twisted Alexander polynomial}
\label{atp}

\subsection{Twisted Alexander polynomial} Let $K$ be a knot and $\pi_1(K)=\pi_1(S^3\backslash K)$ its knot group. We fix a presentation
$$
\pi_1(K)=
\langle a_1,\ldots,a_\ell~|~r_1,\ldots,r_{\ell-1}\rangle.
$$
(This might not be a Wirtinger representation, but must be of deficiency one.) 

Let $f:\pi_1(K)\to H_1(S^3\backslash K,\BZ)
\cong {\BZ}
=\langle t
\rangle$ 
be the abelianization homomorphism and $\rho:\pi_1(K)\to SL_k(\BC)$ a representation. These maps naturally induce two ring homomorphisms $\widetilde{f}:{\BZ}[\pi_1(K)]\rightarrow {\BZ}[t^{\pm1}]$ and $\widetilde{\rho}: {\BZ}[\pi_1(K)] \rightarrow \CM(k,{\BC})$, 
where ${\BZ}[\pi_1(K)]$ is the group ring of $\pi_1(K)$ 
and 
$\CM(k,{\BC})$ is the matrix algebra of degree $k$ over ${\BC}$. 
Then 
$\widetilde{\rho}\otimes\widetilde{f}: {\BZ}[\pi_1(K)]\to \CM\left(k,{\BC}[t^{\pm1}]\right)$ 
is a ring homomorphism. 
Let 
$F_\ell$ be the free group on 
generators $a_1,\ldots,a_\ell$ and 
$\Phi:{\BZ}[F_\ell]\to \CM\left(k,{\BC}[t^{\pm1}]\right)$
the composition of the surjective map 
${\BZ}[F_\ell]\to{\BZ}[\pi_1(K)]$ 
induced by the presentation of $\pi_1(K)$ 
and the map 
$\widetilde{\rho}\otimes\widetilde{f}:{\BZ}[\pi_1(K)]\to \CM(k,{\BC}[t^{\pm1}])$. 

We consider the $(\ell-1)\times \ell$ matrix $M$ 
whose $(i,j)$-component is the $k\times k$ matrix 
$$
\Phi\left(\frac{\partial r_i}{\partial a_j}\right)
\in \CM\left(k,{\BC}[t^{\pm1}]\right),
$$
where 
$\frac{\partial}{\partial a}$ 
denotes the Fox derivative. 
For 
$1\leq j\leq \ell$, 
let $M_j$ be
the $(\ell-1)\times(\ell-1)$ matrix obtained from $M$ 
by removing the $j$th column. 
We regard $M_j$ as 
a $k(\ell-1)\times k(\ell-1)$ matrix with coefficients in 
${\BC}[t^{\pm1}]$. 
Then Wada's twisted Alexander polynomial 
of the knot $K$ associated to the representation $\rho:\pi_1(K)\to SL_k({\BC})$ 
is defined to be the rational function 
$$
\Delta^{\rho}_{K}(t)
=\frac{\det M_j}{\det\Phi(1-a_j)}. 
$$
It is defined 
up to a factor $t^{km}~(m\in{\BZ})$, see \cite{Wa}. 

\subsection{Adjoint twisted Alexander polynomial} The adjoint action, $\text{Ad}$, is the conjugation on the Lie algebra $sl_2(\BC)$ by the Lie group $SL_2(\BC)$. For $A \in SL_2(\BC)$ and $g \in sl_2(\BC)$ we have $\text{Ad}_A(g)=AgA^{-1}$. For each representation $\rho: \pi_1(K) \to SL_2(\BC)$, the composition $\text{Ad} \circ \rho: \pi_1(K) \to SL_3(\BC)$ is a representation and hence  
one can define the twisted Alexander polynomial $\Delta^{\text{Ad} \circ \rho}_{K}(t)$. We call $\Delta^{\text{Ad} \circ \rho}_{K}(t)$ the adjoint twisted Alexander polynomial associated to $\rho$. In this paper we are interested in the adjoint twisted Alexander polynomial associated to nonabelian $SL_2(\BC)$-representations.

\section{Matrix Computations}

\label{matrix}

\begin{proposition} \label{prop1}
Suppose $M = \left[ \begin{array}{cc}
e & f \\
g & h \end{array} \right] \in SL_2(\BC).$ 
Let $\mu = \tr M$, $X = S^2_{n-1}(\mu)$ and $Y = S_{n-1}(\mu) S_{n-2}(\mu)$. Then 
$$
\sum_{i=0}^{n-1} (Ad_M)^i = \frac{1}{\mu^2-4} \left[ \begin{array}{ccc}
C_{11} & C_{12} & C_{13} \\
C_{21} & C_{22} & C_{23} \\
C_{31} & C_{32} & C_{33}\end{array} \right],
$$
where 
\begin{eqnarray*}
C_{11} &=&  2nfg + h^2 (2X-\mu Y) -  2h(\mu X -  2 Y) + (\mu^2-2)X - \mu Y,\\
C_{12} &=& 2f \big( n(e-h) + h (2X -\mu Y) - \mu X + 2 Y \big),\\
C_{13} &=& f^2 (2n- 2 X + \mu Y),\\
C_{21} &=& -g \big( n(h-e) - h (2X -\mu Y) + \mu X - 2 Y \big),\\
C_{22} &=& n(e-h)^2+2 fg(2X - \mu Y),\\
C_{23} &=& f (n(e-h) + h \big( 2X - \mu Y) - \mu X + (\mu^2-2)Y \big),\\
C_{31} &=& g^2 (2n- 2X + \mu Y),\\
C_{32} &=& -2g \big( n(h-e)-h(2X-\mu Y) + \mu X - (\mu^2-2)Y \big),\\
C_{33} &=& 2n fg + h^2(2X - \mu Y) - 2 h \big( \mu X- (\mu^2-2)Y \big) + (\mu^2-2)X -  (\mu^3-3\mu)Y.
\end{eqnarray*}
\end{proposition}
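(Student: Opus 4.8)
The plan is to fix the standard basis $e_1=\bigl[\begin{smallmatrix}0&1\\0&0\end{smallmatrix}\bigr]$, $e_2=\bigl[\begin{smallmatrix}1&0\\0&-1\end{smallmatrix}\bigr]$, $e_3=\bigl[\begin{smallmatrix}0&0\\1&0\end{smallmatrix}\bigr]$ of $sl_2(\BC)$, ordered as $(e_1,e_2,e_3)$, so that every $3\times3$ matrix below represents an endomorphism of $sl_2(\BC)$ with the images of the basis vectors placed in its columns. The whole computation is driven by the Cayley--Hamilton relation $M^2=\mu M-I$: it gives $M^{\,i}=S_{i-1}(\mu)M-S_{i-2}(\mu)I$ for every integer $i$ (with $S_{-1}=0$ and $S_{-2}=-1$), and, applied to $M^{-1}$, which also has trace $\mu$ and determinant $1$, it gives $M^{-i}=S_{i-1}(\mu)M^{-1}-S_{i-2}(\mu)I$.

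Writing $S_k$ for $S_k(\mu)$ and using $(\mathrm{Ad}_M)^i(g)=M^igM^{-i}$ (since $\mathrm{Ad}_{M^i}=(\mathrm{Ad}_M)^i$), I expand
$$(\mathrm{Ad}_M)^i(g)=(S_{i-1}M-S_{i-2}I)\,g\,(S_{i-1}M^{-1}-S_{i-2}I)=S_{i-1}^2\,MgM^{-1}-S_{i-1}S_{i-2}\,(Mg+gM^{-1})+S_{i-2}^2\,g .$$
Summing over $i=0,\dots,n-1$ then expresses
$$\sum_{i=0}^{n-1}(\mathrm{Ad}_M)^i=P\,\Lambda-Q\,\mathcal N+R\,I_3 ,$$
where $P=\sum_{i=0}^{n-1}S_{i-1}^2$, $Q=\sum_{i=0}^{n-1}S_{i-1}S_{i-2}$, $R=\sum_{i=0}^{n-1}S_{i-2}^2$, and $\Lambda$, $\mathcal N$ are the matrices of $g\mapsto MgM^{-1}$ (that is, $\mathrm{Ad}_M$) and $g\mapsto Mg+gM^{-1}$ respectively. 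A direct $2\times2$ computation, using $M^{-1}=\bigl[\begin{smallmatrix}h&-f\\-g&e\end{smallmatrix}\bigr]$, produces
$$\Lambda=\left[\begin{array}{ccc}e^2&-2ef&-f^2\\-eg&eh+fg&fh\\-g^2&2gh&h^2\end{array}\right],\qquad \mathcal N=\left[\begin{array}{ccc}2e&-2f&0\\-g&\mu&f\\0&2g&2h\end{array}\right].$$

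It remains to evaluate the scalar sums. Reindexing and using $S_{-1}=0$, $S_{-2}=-1$, one has $P=\sum_{j=0}^{n-2}S_j^2$, $Q=\sum_{j=0}^{n-2}S_jS_{j-1}$, $R=1+\sum_{j=0}^{n-3}S_j^2$. Substituting $\mu=\zeta+\zeta^{-1}$ and $S_k(\mu)=(\zeta^{k+1}-\zeta^{-k-1})/(\zeta-\zeta^{-1})$ turns each sum into a combination of geometric series; collapsing these and rewriting the outcome through $S_{n-1}$ and $S_{n-2}$ gives
$$P=\frac{2X-\mu Y-2n}{\mu^2-4},\qquad Q=\frac{\mu X-(\mu^2-2)Y-n\mu}{\mu^2-4},\qquad R=\frac{(\mu^2-2)X-(\mu^3-3\mu)Y-2n}{\mu^2-4},$$
where $X=S^2_{n-1}(\mu)$ and $Y=S_{n-1}(\mu)S_{n-2}(\mu)$; alternatively these three identities follow by a straightforward induction on $n$ from the Chebyshev recurrence.

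Finally I assemble $\sum_{i=0}^{n-1}(\mathrm{Ad}_M)^i=P\Lambda-Q\mathcal N+R\,I_3$ and clear the common denominator $\mu^2-4$: the $(i,j)$-numerator is $(2X-\mu Y-2n)\Lambda_{ij}-(\mu X-(\mu^2-2)Y-n\mu)\mathcal N_{ij}+((\mu^2-2)X-(\mu^3-3\mu)Y-2n)(I_3)_{ij}$, a polynomial in $e,f,g,h,\mu,X,Y,n$, and one checks it equals the stated $C_{ij}$ by rewriting with $\mu=e+h$ and $eh-fg=1$. For example the $(1,3)$-numerator is $-f^2(2X-\mu Y-2n)=f^2(2n-2X+\mu Y)$; the constant part of the $(1,1)$-numerator is $-2n(e^2-e\mu+1)=-2n(1-eh)=2nfg$; and the $X$-coefficient of the $(2,2)$-numerator is $2(eh+fg)-2=4fg$. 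The one genuinely laborious point is this last verification: conceptually there is nothing to it, but the substitutions $\mu=e+h$ and $fg=eh-1$ must be carried carefully through all nine entries. The computations of $\Lambda$, $\mathcal N$ and of $P,Q,R$ are routine.
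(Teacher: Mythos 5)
Your proof is correct, but it follows a genuinely different route from the paper's. The paper diagonalizes $M=P\,\mathrm{diag}(\lambda_-,\lambda_+)P^{-1}$ with $P$ built from the eigenvectors, writes out $Ad_P$ explicitly in the basis $\{E,H,F\}$, sums the geometric series of $\mathrm{diag}(\lambda_-^2,1,\lambda_+^2)$ to get $\mathrm{diag}(\lambda_-^{n-1}S_{n-1}(\mu),\,n,\,\lambda_+^{n-1}S_{n-1}(\mu))$, and conjugates back, finishing with identities converting symmetric functions of $\lambda_\pm^{k}$ into Chebyshev polynomials. You instead invoke Cayley--Hamilton to write $M^{\pm i}=S_{i-1}(\mu)M^{\pm1}-S_{i-2}(\mu)I$, which collapses the whole sum to $P\Lambda-Q\mathcal N+RI_3$ for two fixed, easily computed $3\times3$ matrices and three scalar Chebyshev sums. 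I checked your $\Lambda$ and $\mathcal N$ against direct computation in the basis $(E,H,F)$ and verified the closed forms of $P,Q,R$ and the resulting entries (e.g.\ $C_{11}$, $C_{12}$, $C_{13}$, $C_{22}$) against the stated $C_{ij}$ after substituting $\mu=e+h$ and $fg=eh-1$; everything matches. Your approach has the advantage of avoiding the genericity hypotheses implicit in the paper's argument (distinct eigenvalues and $f\neq0$, which the paper would need to remove by a density/continuity remark), and it isolates the only nontrivial content in three scalar identities provable by induction; the paper's approach is more direct in that the eigenbasis makes the geometric series trivial, at the cost of manipulating $\sqrt{\mu^2-4}$ and the matrix $Ad_P$. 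Both end with a comparable amount of entry-by-entry bookkeeping, which you rightly flag as the only laborious step.
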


\begin{proof}
Let $\lambda_{\pm} = (\mu \pm \sqrt{\mu^2-4})/2$ be the eigenvalues of $M$. Note that $\lambda_+ \lambda_- = 1$ and $\lambda_+ + \lambda_- = \mu$. Let
$$P = \left[ \begin{array}{cc}
f & f \\
h-\lambda_+ & h-\lambda_- \end{array} \right].$$
Then $M = P \text{diag}(\lambda_-, \lambda_+) P^{-1}$.

Let $\alpha = h-\lambda_+$ and $\beta = h-\lambda_-$. Note that $\alpha \beta = h^2-h(\lambda_+ + \lambda_-)+1 = h^2-h(h+e)+1=-fg$ and $\alpha + \beta = 2h -(\lambda_+ + \lambda_-)= h-e$. With respect to the basis $\{E,H,F\}$ of $sl_2(\BC)$, the matrix of the adjoint action of $P$ is
$$Ad_P = \frac{1}{\lambda_- - \lambda_+} \left[ \begin{array}{ccc}
-f & 2f & f \\
\alpha & -(\alpha + \beta) & -\beta \\
\alpha^2/f & -2\alpha\beta/f & -\beta^2/f\end{array} \right].$$
Then $Ad_{M} = (Ad_P) \text{diag}(\lambda_-^2, 1, \lambda_+^2) (Ad_P)^{-1}.$ Hence
\begin{eqnarray*}
\sum_{i=0}^{n-1}(Ad_M)^i &=& (Ad_P) \text{diag}(\lambda_-^{n-1} S_{n-1}(\mu), n, \lambda_+^{n-1} S_{n-1}(\mu)) (Ad_P)^{-1}\\
&=& \frac{1}{(\lambda_- - \lambda_+)^2} \left[ \begin{array}{ccc}
C_{11} & C_{12} & C_{13} \\
C_{21} & C_{22} & C_{23} \\
C_{31} & C_{32} & C_{33}\end{array} \right],
\end{eqnarray*}
where 
\begin{eqnarray*}
C_{11} &=& -2 n \alpha\beta + (\beta^2\lambda_-^{n-1} + \alpha^2 \lambda_+^{n-1}) S_{n-1}(\mu)\\
C_{12} &=& 2f \big( -n(\alpha + \beta) + (\beta \lambda_-^{n-1} + \alpha \lambda_+^{n-1}) S_{n-1}(\mu) \big)\\
C_{13} &=& f^2 (2n - (\lambda_-^{n-1} + \lambda_+^{n-1}) S_{n-1}(\mu)),\\
C_{21} &=& (\alpha\beta / f) \big( n (\alpha + \beta) - (\beta \lambda_-^{n-1} + \alpha \lambda_+^{n-1}) S_{n-1}(\mu) \big)\\
C_{22} &=& n(\alpha + \beta)^2 - 2 \alpha \beta (\lambda_-^{n-1} + \lambda_+^{n-1}) S_{n-1}(\mu),\\
C_{23} &=& f \big( -n(\alpha + \beta) + (\alpha \lambda_-^{n-1} + \beta \lambda_+^{n-1}) S_{n-1}(\mu) \big)\\
C_{31} &=& (\alpha\beta / f)^2 (2n - (\lambda_-^{n-1} + \lambda_+^{n-1}) S_{n-1}(\mu)),\\
C_{32} &=& (2\alpha\beta /f) \big( n(\alpha + \beta) - (\alpha \lambda_-^{n-1} + \beta \lambda_+^{n-1}) S_{n-1}(\mu) \big)\\
C_{33} &=& -2n\alpha\beta + (\alpha^2 \lambda_-^{n-1} + \beta^2 \lambda_+^{n-1}) S_{n-1}(\mu).
\end{eqnarray*}
The proposition then follows from $(\lambda_- - \lambda_+)^2 = \mu^2-4$, $\alpha \beta =- fg$, $\alpha + \beta = h-e$,
\begin{eqnarray*}
\alpha \lambda_-^{n-1} + \beta \lambda_+^{n-1} &=& h (\lambda_-^{n-1} + \lambda_+^{n-1}) -  (\lambda_-^{n-2} + \lambda_+^{n-2}),\\
\beta \lambda_-^{n-1} + \alpha \lambda_+^{n-1} &=& h ( \lambda_-^{n-1} + \lambda_+^{n-1}) -  (\lambda_-^{n} + \lambda_+^{n}),\\
\alpha^2 \lambda_-^{n-1} + \beta^2 \lambda_+^{n-1} &=&  h^2 (\lambda_-^{n-1} + \lambda_+^{n-1}) - 2h (\lambda_-^{n-2} + \lambda_+^{n-2}) + (\lambda_-^{n-3} + \lambda_+^{n-3}),\\
\beta^2\lambda_-^{n-1} + \alpha^2 \lambda_+^{n-1} &=&  h^2(\lambda_-^{n-1} + \lambda_+^{n-1}) - 2h (\lambda_-^{n} + \lambda_+^{n}) + (\lambda_-^{n+1} + \lambda_+^{n+1}),
\end{eqnarray*}
and $\lambda_-^{n-l} + \lambda_+^{n-l} = (\lambda_-^{l-1} + \lambda_+^{l-1}) S_{n-1}(\mu) - (\lambda_-^{l} + \lambda_+^{l}) S_{n-2}(\mu).$
\end{proof}

\begin{proposition} \label{prop2}
Suppose $M = \left[ \begin{array}{cc}
e & f \\
g & h \end{array} \right] \in SL_2(\BC).$ 
Let $\mu = \tr M$. Then 
$$
(Ad_M)^n = \left[ \begin{array}{ccc}
D_{11} & D_{12} & D_{13} \\
D_{21} & D_{22} & D_{23} \\
D_{31} & D_{32} & D_{33}\end{array} \right],
$$
where 
\begin{eqnarray*}
D_{11} &=& \big( S_{p}(\mu) - hS_{p-1}(\mu) \big)^2,\\
D_{12} &=& -2fS_{p-1}(\mu) \big( S_{p}(\mu) - hS_{p-1}(\mu) \big),\\
D_{13} &=& -f^2 S^2_{p-1}(\mu),\\
D_{21} &=& - g S_{p-1}(\mu) \big( S_{p}(\mu) - hS_{p-1}(\mu) \big),\\
D_{22} &=& \big( S_{p}(\mu) - hS_{p-1}(\mu) \big) \big( S_{p}(\mu) - eS_{p-1}(\mu) \big) + fg S^2_{p-1}(\mu),\\
D_{23} &=& fS_{p-1}(\mu) \big( S_{p}(\mu) - eS_{p-1}(\mu) \big),\\
D_{31} &=& -g^2 S^2_{p-1}(\mu),\\
D_{32} &=& 2gS_{p-1}(\mu) \big( S_{p}(\mu) - eS_{p-1}(\mu) \big),\\
D_{33} &=& \big( S_{p}(\mu) - eS_{p-1}(\mu) \big)^2.
\end{eqnarray*}
\end{proposition}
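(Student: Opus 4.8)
The plan is to reduce the statement to the identity $(Ad_M)^n = Ad_{M^n}$, which holds because $Ad\colon SL_2(\BC)\to SL_3(\BC)$ is a group homomorphism. Thus it suffices to (i) compute the entries of $M^n$ explicitly, and (ii) record the formula for $Ad_N$ of an arbitrary $N\in SL_2(\BC)$ with respect to the basis $\{E,H,F\}$ of $sl_2(\BC)$ fixed in the proof of Proposition \ref{prop1}, and then set $N=M^n$.

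For step (i) I would use the Cayley--Hamilton relation $M^2=\mu M-I$ together with an induction on $n$ (and the Chebyshev recursion, with $S_{-1}=0$, $S_0=1$) to get $M^n=S_{n-1}(\mu)\,M-S_{n-2}(\mu)\,I$. Rewriting the two diagonal entries by means of $e+h=\mu$ and the recursion, e.g. $S_{n-1}(\mu)e-S_{n-2}(\mu)=S_{n-1}(\mu)(\mu-h)-S_{n-2}(\mu)=S_n(\mu)-hS_{n-1}(\mu)$, one obtains
$$M^n=\left[\begin{array}{cc}S_n(\mu)-hS_{n-1}(\mu) & fS_{n-1}(\mu)\\ gS_{n-1}(\mu) & S_n(\mu)-eS_{n-1}(\mu)\end{array}\right],$$
i.e. $M^n$ has entries equal to the quantities appearing in the formulas for the $D_{ij}$ (with $p=n$).

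For step (ii), writing $N=\left[\begin{smallmatrix}e' & f'\\ g' & h'\end{smallmatrix}\right]$ and $N^{-1}=\left[\begin{smallmatrix}h' & -f'\\ -g' & e'\end{smallmatrix}\right]$, a direct conjugation of the three basis vectors gives $NEN^{-1}=(e')^2E-e'g'H-(g')^2F$, $NHN^{-1}=-2e'f'E+(e'h'+f'g')H+2g'h'F$, and $NFN^{-1}=-(f')^2E+f'h'H+(h')^2F$, so that
$$Ad_N=\left[\begin{array}{ccc}(e')^2 & -2e'f' & -(f')^2\\ -e'g' & e'h'+f'g' & f'h'\\ -(g')^2 & 2g'h' & (h')^2\end{array}\right].$$
Substituting the entries of $M^n$ from step (i) for $e',f',g',h'$ turns this matrix into precisely $[D_{ij}]$; for instance the $(2,2)$ entry becomes $(S_n(\mu)-hS_{n-1}(\mu))(S_n(\mu)-eS_{n-1}(\mu))+fg\,S^2_{n-1}(\mu)$, which is $D_{22}$, and the remaining eight entries match in the same way.

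There is no real conceptual difficulty here; the only point needing care is keeping the basis ordering $\{E,H,F\}$ and all the signs consistent with the conventions of Proposition \ref{prop1} (note that in that proof $P\notin SL_2(\BC)$, which is why its $Ad$-matrix carries the extra scalar $1/(\lambda_--\lambda_+)$, whereas here $N=M^n\in SL_2(\BC)$ and no such scalar appears), together with checking the Chebyshev rewriting of the diagonal entries in step (i). An alternative would be to derive Proposition \ref{prop2} by telescoping Proposition \ref{prop1}, i.e. writing $(Ad_M)^n$ as $\sum_{i=0}^{n}(Ad_M)^i-\sum_{i=0}^{n-1}(Ad_M)^i$, but simplifying the resulting differences of products of Chebyshev polynomials is more laborious than the direct computation above.
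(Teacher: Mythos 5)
Your proof is correct, but it takes a different route from the paper's. The paper disposes of this proposition by saying its proof is ``similar to that of Proposition \ref{prop1}'', i.e.\ by diagonalizing $M=P\,\mathrm{diag}(\lambda_-,\lambda_+)P^{-1}$, computing the matrix of $Ad_P$ in the basis $\{E,H,F\}$, writing $(Ad_M)^n=(Ad_P)\,\mathrm{diag}(\lambda_-^{2n},1,\lambda_+^{2n})(Ad_P)^{-1}$, and then converting the resulting symmetric expressions in $\lambda_\pm$ into Chebyshev polynomials. You instead use the homomorphism property $(Ad_M)^n=Ad_{M^n}$, compute $M^n=S_{n-1}(\mu)M-S_{n-2}(\mu)I$ from Cayley--Hamilton, and substitute into the explicit $3\times 3$ matrix of $Ad_N$ for a general $N\in SL_2(\BC)$; I checked the $Ad_N$ matrix and the nine resulting entries, and they all agree with the stated $D_{ij}$ (with $p=n$). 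Your argument is arguably cleaner: it sidesteps the genericity hypotheses implicit in the paper's diagonalization (namely $f\neq 0$ so that $P$ is invertible, and $\mu\neq\pm 2$ so that $M$ is diagonalizable), which the paper would have to remove by a density/continuity argument, whereas the Cayley--Hamilton identity holds for every $M\in SL_2(\BC)$ with no exceptional cases. What the paper's approach buys in exchange is uniformity: the same diagonalization machinery handles both the geometric sum $\sum_{i=0}^{n-1}(Ad_M)^i$ of Proposition \ref{prop1} (where one genuinely needs to sum the eigenvalues $\lambda_\pm^{2i}$) and the single power here, so the author gets Proposition \ref{prop2} essentially for free after the harder Proposition \ref{prop1}.
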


\begin{proof}
The proof of Proposition \ref{prop2} is similar to that of Proposition \ref{prop1}.
\end{proof}

\section{Proof of Theorem \ref{thm-atp}}

\label{proof}

Recall that $K=J(2m,2n)$ and $\pi_1(K) = \la a,b~|~w^na=bw^n \ra$, where $a,b$ are meridians and $w=(ba^{-1})^m(b^{-1}a)^m$. 
 Suppose $\rho: \pi_1(K) \to SL_2(\BC)$ is a nonabelian representation. Up to conjugation, we may assume that $$\rho(a) = \left[ \begin{array}{cc}
s & 1 \\
0 & s^{-1} \end{array} \right] \quad \text{and} \quad \rho(b) = \left[ \begin{array}{cc}
s & 0 \\
2-y & s^{-1} \end{array} \right]$$ where $s \not=0$ and $y \not= 2$ satisfy the Riley equation $\phi_K(s,y)=0$.

The formulas in the following proposition are taken from \cite{Tr1}.

\begin{proposition} \label{Riley}
$i)$ $\rho(w)=\left[ \begin{array}{cc}
w_{11} & w_{12} \\
(2-y)w_{12} & w_{22} \end{array} \right]$, where
\begin{eqnarray*}
w_{11} &=& S^2_{m}(y) + (2-2y)S_m(y)S_{m-1}(y) + (1+2s^2-2y-s^2y+y^2)S^2_{m-1}(y),\\
w_{12} &=& (s^{-1}-s)S_m(y)S_{m-1}(y) + (s^{-1}+s-s^{-1}y)S^2_{m-1}(y),\\
w_{22} &=& S^2_{m}(y) - 2S_m(y)S_{m-1}(y) + (1+2s^{-2}-s^{-2}y)S^2_{m-1}(y).
\end{eqnarray*}
Hence $z:= \tr \rho(w)=2S^2_{m}(y) - 2yS_m(y)S_{m-1}(y) + \big(  (2+2s^2+2s^{-2})(1-y) + y^2 \big) S^2_{m-1}(y)$.

$ii)$ $\phi_K(s,y) = S_{n-2}(z) - \big[ 1  - (y-s^2-s^{-2})S_{m-1}(y) \big( S_{m-1}(y)- S_{m-2}(y) \big) \big] S_{n-1}(z)$.

$iii)$ $S^2_{n-1}(z) = \big[ (y-s^2-s^{-2})S^2_{m-1}(y) \left( 2-s^2-s^{-2} + (y-s^2-s^{-2})(y-2)S^2_{m-1}(y) \right) \big]^{-1}$.
\end{proposition}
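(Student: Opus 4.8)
The plan is to take the three parts in turn; the engine throughout is the Cayley--Hamilton identity $M^k = S_{k-1}(\mu)\,M - S_{k-2}(\mu)\,I$ for $M \in SL_2(\BC)$ with $\mu = \tr M$, together with the Chebyshev relations $S_k(v) = vS_{k-1}(v) - S_{k-2}(v)$ and the ``norm'' identity $S^2_{k-1}(v) - S_k(v)S_{k-2}(v) = 1$. These are invoked repeatedly to keep everything expressed in $S_m(y), S_{m-1}(y)$ on the $w$-side and in $S_n(z), S_{n-1}(z)$ on the relator side.

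\emph{Part $i)$.} First compute $A := \rho(ba^{-1})$ and $B := \rho(b^{-1}a)$ directly from the given matrices for $\rho(a)$ and $\rho(b)$; one checks $\tr A = \tr B = y$. By Cayley--Hamilton,
$$\rho(w) = A^m B^m = S^2_{m-1}(y)\,AB - S_{m-1}(y)S_{m-2}(y)\,(A+B) + S^2_{m-2}(y)\,I.$$
I would multiply out the two $2\times 2$ matrices $AB$ and $A+B$, substitute $S_{m-2}(y) = yS_{m-1}(y) - S_m(y)$ in the result, and read off the entries; this yields the stated $w_{11}, w_{12}, w_{22}$, and in particular the lower-left entry of $\rho(w)$ comes out equal to $(2-y)w_{12}$ on the nose. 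Then $z = \tr\rho(w) = w_{11} + w_{22}$, and collecting the coefficient of $S^2_{m-1}(y)$ --- using $S^2_{m-1} - S_mS_{m-2} = 1$, which also lets one record $z - 2$ as an explicit scalar multiple of $S^2_{m-1}(y)$ --- gives the closed form for $z$.

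\emph{Part $ii)$.} Since $\pi_1(K) = \la a, b \mid w^na = bw^n \ra$, the matrices $\rho(a), \rho(b)$ extend to a representation exactly when $\rho(w)^n\rho(a) = \rho(b)\rho(w)^n$. Write $W = \rho(w)$, $z = \tr W$; Cayley--Hamilton turns this into
$$S_{n-1}(z)\big(W\rho(a) - \rho(b)W\big) = S_{n-2}(z)\big(\rho(a) - \rho(b)\big).$$
The key point is that, using \emph{only} the shape of $W$ found in $i)$ (lower-left entry $=(2-y)\times$ upper-right entry), a short direct computation gives $W\rho(a) - \rho(b)W = \kappa\,(\rho(a) - \rho(b))$ with the scalar $\kappa := w_{11} + (s^{-1} - s)w_{12}$. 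Since $\rho(a) - \rho(b) \ne 0$, the representation condition is equivalent to the single scalar equation $\kappa S_{n-1}(z) - S_{n-2}(z) = 0$, that is $\phi_K(s,y) := S_{n-2}(z) - \kappa S_{n-1}(z) = 0$ (the overall sign of a defining polynomial being immaterial). It then remains to substitute the formulas for $w_{11}, w_{12}$ from $i)$ into $\kappa$ and simplify, using $S_m = yS_{m-1} - S_{m-2}$ and $S^2_{m-1} - S_mS_{m-2} = 1$, to the form $\kappa = 1 - (y - s^2 - s^{-2})S_{m-1}(y)\big(S_{m-1}(y) - S_{m-2}(y)\big)$, which is precisely the bracketed factor in the stated $\phi_K$.

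\emph{Part $iii)$.} Assume $\phi_K(s,y) = 0$, i.e.\ $S_{n-2}(z) = \kappa S_{n-1}(z)$ with $\kappa$ as in $ii)$. Feeding $S_n(z) = zS_{n-1}(z) - S_{n-2}(z)$ into $S^2_{n-1}(z) - S_n(z)S_{n-2}(z) = 1$ gives $S^2_{n-1}(z)\big(\kappa^2 - z\kappa + 1\big) = 1$, so the claim reduces to the purely algebraic identity
$$\kappa^2 - z\kappa + 1 = (y - s^2 - s^{-2})S^2_{m-1}(y)\Big(2 - s^2 - s^{-2} + (y - s^2 - s^{-2})(y-2)S^2_{m-1}(y)\Big).$$
To prove it I would write $\kappa = 1 - \tau$ with $\tau = (y - s^2 - s^{-2})S_{m-1}(y)\big(S_{m-1}(y) - S_{m-2}(y)\big)$, use the expression for $z - 2$ from $i)$ so that $\kappa^2 - z\kappa + 1 = \tau^2 - (z-2)\kappa$, and expand, repeatedly applying $S_m = yS_{m-1} - S_{m-2}$, $S^2_{m-1} - S_mS_{m-2} = 1$, and their consequence $(S_{m-1} - S_{m-2})^2 = 1 + (y-2)S_{m-1}S_{m-2}$; the terms then collapse onto the displayed product, and taking reciprocals gives $iii)$. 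The main obstacle is exactly this last identity: it is ``only'' Chebyshev bookkeeping, but one must make the substitutions in a disciplined order, since a brute-force expansion is quite error-prone. The simplification of $\kappa$ in $ii)$ is of the same flavour but much shorter, while $i)$ is a routine, if lengthy, matrix computation.
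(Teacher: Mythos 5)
The paper does not actually prove this proposition; it imports the formulas from \cite{Tr1}. Your Cayley--Hamilton route --- $M^k=S_{k-1}(\mu)M-S_{k-2}(\mu)I$ for part $i)$, the observation that $W\rho(a)-\rho(b)W=\kappa\,(\rho(a)-\rho(b))$ with $\kappa=w_{11}+(s^{-1}-s)w_{12}$ so that the relation collapses to the scalar equation $S_{n-2}(z)=\kappa S_{n-1}(z)$ for part $ii)$, and the reduction of part $iii)$ to $S^2_{n-1}(z)\bigl(\kappa^2-z\kappa+1\bigr)=1$ --- is the standard argument and is essentially what \cite{Tr1} does. I checked the key identities you rely on: the entries of $AB$ and $A+B$ do reproduce the stated $w_{11},w_{12},w_{22}$ after substituting $S_{m-2}=yS_{m-1}-S_m$; $\kappa$ does simplify to $1-(y-s^2-s^{-2})S_{m-1}(y)\bigl(S_{m-1}(y)-S_{m-2}(y)\bigr)$; and $\kappa^2-z\kappa+1=\tau^2-(z-2)\kappa$ does collapse to the product in $iii)$ exactly along the lines you sketch.

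One step fails as literally asserted. Summing the stated $w_{11}$ and $w_{22}$ gives the coefficient $2+2s^2+2s^{-2}-2y-s^2y-s^{-2}y+y^2$ on $S^2_{m-1}(y)$, whereas the displayed closed form has $(2+2s^2+2s^{-2})(1-y)+y^2$; these differ by $(s^2+s^{-2})y$. A spot check at $m=1$, $s=1$, $y=1$ gives $\tr\rho(w)=3$ by direct multiplication (and Fricke's identity $\tr[b,a^{-1}]=2x^2+y^2-x^2y-2$ confirms it), while the displayed formula gives $1$. So the displayed expression for $z$ (repeated in the paper's introduction) is itself erroneous, and your claim that collecting coefficients ``gives the closed form for $z$'' is not true as written; carrying out the computation would have exposed this. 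The remainder of the proposition is consistent with the corrected value --- in particular $z-2=(y-2)(y-s^2-s^{-2})S^2_{m-1}(y)$, which is precisely what your argument for $iii)$ needs, holds for $z=w_{11}+w_{22}$ but not for the displayed $z$. Your write-up should therefore either correct the coefficient to $(2+2s^2+2s^{-2})-(2+s^2+s^{-2})y+y^2$ or explicitly flag the discrepancy; with that fix, everything else in your outline is sound.
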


Let $r=w^naw^{-n}b^{-1}$. We have $\Delta_{K}^{Ad \circ \rho}(t)=\det \Phi \left( \frac{\partial r}{\partial a} \right) \big/ \det \Phi(b-1)$. It is easy to see that $\det \Phi(b-1) = (t-1)(t-s^2)(t-s^{-2})$.

For an integer $p$ and a word $u$ (in 2 letters $a,b$), let $\delta_p(u)=1+u+\cdots+u^{p}$. 

\begin{lemma} \label{r/a}
We have 
$$\frac{\partial r}{\partial a} = w^n \big[ 1+(1-a)\delta_{n-1}(w^{-1})  (a^{-1}b)^m (b^{-1} -1) \delta_{m-1}(ab^{-1}) \big].$$
\end{lemma}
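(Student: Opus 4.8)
The plan is to compute the Fox derivative $\partial r/\partial a$ directly from the word $r = w^n a w^{-n} b^{-1}$, using the standard rules for Fox calculus: $\partial(uv)/\partial a = \partial u/\partial a + u\,\partial v/\partial a$, $\partial a/\partial a = 1$, $\partial b/\partial a = 0$, and $\partial(u^{-1})/\partial a = -u^{-1}\,\partial u/\partial a$. First I would break $r$ into the three blocks $w^n$, $a$, $w^{-n}$, and $b^{-1}$ and apply the product rule, which gives
\[
\frac{\partial r}{\partial a} = \frac{\partial w^n}{\partial a} + w^n + w^n a\,\frac{\partial w^{-n}}{\partial a} + w^n a w^{-n}\,\frac{\partial b^{-1}}{\partial a}.
\]
Since the last term vanishes ($\partial b^{-1}/\partial a = -b^{-1}\partial b/\partial a = 0$) and $\partial w^{-n}/\partial a = -w^{-n}\,\partial w^n/\partial a$, this collapses to
\[
\frac{\partial r}{\partial a} = (1 - w^n a w^{-n})\,\frac{\partial w^n}{\partial a} + w^n,
\]
using $w^n a w^{-n} = r b = b$ in $\pi_1(K)$... but to stay at the level of the free group I would instead keep $w^n a w^{-n}$ and factor $w^n$ out on the left afterward.

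Next I would compute $\partial w^n/\partial a$. Writing $\partial w^n/\partial a = \delta_{n-1}(w)\,\partial w/\partial a$ where $\delta_{n-1}(w) = 1 + w + \cdots + w^{n-1}$, the task reduces to $\partial w/\partial a$ for $w = (ba^{-1})^m(b^{-1}a)^m$. Applying the product rule to the two halves: $\partial w/\partial a = \partial (ba^{-1})^m/\partial a + (ba^{-1})^m\,\partial(b^{-1}a)^m/\partial a$. For the first half, $\partial(ba^{-1})^m/\partial a = \delta_{m-1}(ba^{-1})\,\partial(ba^{-1})/\partial a = \delta_{m-1}(ba^{-1})\cdot b\cdot(-a^{-1}) = -\delta_{m-1}(ba^{-1})\,ba^{-1}$, and for the second half $\partial(b^{-1}a)^m/\partial a = \delta_{m-1}(b^{-1}a)\,\partial(b^{-1}a)/\partial a = \delta_{m-1}(b^{-1}a)\,b^{-1}$. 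So $\partial w/\partial a = -\delta_{m-1}(ba^{-1})\,ba^{-1} + (ba^{-1})^m\delta_{m-1}(b^{-1}a)\,b^{-1}$.

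Then I would assemble everything, substitute back into the expression for $\partial r/\partial a$, and simplify. The main bookkeeping obstacle is reconciling the raw expression with the clean form in the statement, which involves $\delta_{n-1}(w^{-1})$ rather than $\delta_{n-1}(w)$ and the combination $(a^{-1}b)^m(b^{-1}-1)\delta_{m-1}(ab^{-1})$. The key identities I expect to need are: $(1-w^na w^{-n})\delta_{n-1}(w) = \delta_{n-1}(w)(1-w^{-1}aw)$ type rearrangements — more precisely, factoring $w^n$ out on the left of $(1-w^naw^{-n})\delta_{n-1}(w)\partial w/\partial a$ and using $w^{-n}(1-w^naw^{-n}) = (w^{-n}-aw^{-n})= (1-w^{-1}a w)w^{-n}\cdots$ iterated to turn $\delta_{n-1}(w)$ into $\delta_{n-1}(w^{-1})$ together with a factor $(1-a)$; and rewriting $-\delta_{m-1}(ba^{-1})\,ba^{-1} + (ba^{-1})^m\delta_{m-1}(b^{-1}a)\,b^{-1}$ as $(something)\cdot(a^{-1}b)^m(b^{-1}-1)\delta_{m-1}(ab^{-1})$ by pulling out $(ba^{-1})^m$ and recognizing telescoping sums of the form $\delta_{m-1}(b^{-1}a)b^{-1} - (b^{-1}a)^{-m}\delta_{m-1}(ba^{-1})ba^{-1}\cdot(\text{correction}) = (a^{-1}b)^{?}(b^{-1}-1)\delta_{m-1}(ab^{-1})$. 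Each such step is a finite manipulation in the group ring $\mathbb{Z}[F_2]$ using only $\delta_p(u)(1-u) = 1-u^{p+1}$ and its variants; the hard part is purely organizational — keeping track of which words sit on the left versus the right — so I would verify the final identity by multiplying both sides on the left by $w^{-n}$ and checking the two resulting group-ring elements agree term by term.
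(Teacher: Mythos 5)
Your proposal is correct and is precisely the ``direct calculation'' the paper invokes without detail: the Fox-calculus expansion $\frac{\partial r}{\partial a}=w^n+(1-w^naw^{-n})\,\delta_{n-1}(w)\,\frac{\partial w}{\partial a}$ and your formula $\frac{\partial w}{\partial a}=-\delta_{m-1}(ba^{-1})ba^{-1}+(ba^{-1})^m\delta_{m-1}(b^{-1}a)b^{-1}$ are both right. The final bookkeeping you defer does close up, already in $\mathbb{Z}[F_2]$, via the two identities $\frac{\partial w}{\partial a}=w\,(a^{-1}b)^m(b^{-1}-1)\,\delta_{m-1}(ab^{-1})$ (a consequence of $b^{-1}(ab^{-1})^i=(b^{-1}a)^ib^{-1}$) and $(1-w^naw^{-n})\,\delta_{n-1}(w)\,w=w^n(1-a)\,\delta_{n-1}(w^{-1})$.
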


\begin{proof}
The lemma follows from a direct calculation.
\end{proof}

\begin{proposition} \label{s1}
We have
$$\Phi(\delta_{n-1}(w^{-1})) = \frac{1}{z^2-4}\left[ \begin{array}{ccc}
E_{11} & E_{12} & E_{13} \\
E_{21} & E_{22} & E_{23} \\
E_{31} & E_{32} & E_{33}\end{array} \right],$$ 
where
\begin{eqnarray*}
z &=& 2S^2_{m}(y) - 2yS_m(y)S_{m-1}(y) + \big(  (2+2s^2+2s^{-2})(1-y) + y^2 \big) S^2_{m-1}(y),\\
E_{11} &=&  -2n (y-2)w_{12}^2 + w_{11}^2 (2X-z Y) -  2w_{11}(z X -  2 Y) + (z^2-2)X - z Y,\\
E_{12} &=& -2w_{12} \big( n(w_{22}-w_{11}) + w_{11} (2X -z Y) - z X + 2 Y \big),\\
E_{13} &=& w_{12}^2 (2n- 2 X + z Y),\\
E_{21} &=& -(y-2)w_{12} \big( n(w_{11}-w_{22}) - w_{11} (2X -z Y) + z X - 2 Y \big),\\
E_{22} &=& n(w_{22}-w_{11})^2-2(y-2)w^2_{12}(2X - z Y),\\
E_{23} &=& -w_{12} \big(n(w_{22}-w_{11}) + w_{11} \big( 2X - z Y) - z X + (z^2-2)Y \big),\\
E_{31} &=& (y-2)^2w_{12}^2 (2n- 2X + z Y),\\
E_{32} &=& -2(y-2)w_{12} \big( n(w_{11}-w_{22})-w_{11}(2X-z Y) + z X - (z^2-2)Y \big),\\
E_{33} &=& -2n (y-2)w_{12}^2 + w_{11}^2(2X - z Y) - 2 w_{11} (z X- (z^2-2)Y) \\
&& + \, (z^2-2)X -  (z^3-3z)Y,\\
X &=& \big[ (y-s^2-s^{-2})S^2_{m-1}(y) \left( 2-s^2-s^{-2} + (y-s^2-s^{-2})(y-2)S^2_{m-1}(y) \right) \big]^{-1},\\
Y &=&  \big[ 1  - (y-s^2-s^{-2})S_{m-1}(y) \big( S_m(y) - (y-1)S_{m-1}(y) \big) \big] X.
\end{eqnarray*}
\end{proposition}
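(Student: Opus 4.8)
The plan is to recognize $\Phi\big(\delta_{n-1}(w^{-1})\big)$ as a direct specialization of Proposition~\ref{prop1}. First I would note that $a$ and $b$ are meridians, so the abelianization $f$ sends both to $t$; hence $f(ba^{-1})=f(b^{-1}a)=0$ and therefore $f(w)=0$ (in additive notation), so $\widetilde f(w^{-1})=1$. Consequently
$$\Phi(w^{-1})=(\widetilde\rho\otimes\widetilde f)(w^{-1})=\mathrm{Ad}_{\rho(w)^{-1}}=\big(\mathrm{Ad}_{\rho(w)}\big)^{-1},$$
with no power of $t$ appearing. Since $\Phi$ is a ring homomorphism,
$$\Phi\big(\delta_{n-1}(w^{-1})\big)=\sum_{i=0}^{n-1}\big(\mathrm{Ad}_{\rho(w)^{-1}}\big)^{i},$$
which is exactly the matrix computed by Proposition~\ref{prop1} applied to the $SL_2(\BC)$-matrix $M=\rho(w)^{-1}$.

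Next I would make this substitution explicit. By Proposition~\ref{Riley}(i), $\rho(w)=\left[\begin{array}{cc} w_{11} & w_{12} \\ (2-y)w_{12} & w_{22}\end{array}\right]$ has determinant $1$, so
$$\rho(w)^{-1}=\left[\begin{array}{cc} w_{22} & -w_{12} \\ -(2-y)w_{12} & w_{11}\end{array}\right],\qquad \mu:=\tr\rho(w)^{-1}=\tr\rho(w)=z.$$
I would then substitute, in the notation of Proposition~\ref{prop1}, $(e,f,g,h,\mu)=(w_{22},\,-w_{12},\,(y-2)w_{12},\,w_{11},\,z)$ together with $X=S^2_{n-1}(z)$ and $Y=S_{n-1}(z)S_{n-2}(z)$ into the formulas for the $C_{ij}$. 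Each $C_{ij}$ then becomes the asserted $E_{ij}$: for instance $2nfg=-2n(y-2)w_{12}^2$ gives the first term of $E_{11}$ and $E_{33}$; the factor $2f$ in $C_{12}$ becomes $-2w_{12}$; $e-h$ becomes $w_{22}-w_{11}$; $g=(y-2)w_{12}$ produces the outer factors of $E_{21},E_{23},E_{31},E_{32}$; and the prefactor $(\mu^2-4)^{-1}$ becomes $(z^2-4)^{-1}$. This is a mechanical entry-by-entry verification, which I would record in a short table pairing each $C_{ij}$ with the corresponding $E_{ij}$.

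It then remains to identify the closed forms of $X$ and $Y$ in terms of $s,y,m$. The formula for $X=S^2_{n-1}(z)$ is precisely Proposition~\ref{Riley}(iii). For $Y=S_{n-1}(z)S_{n-2}(z)$, I would first use the Chebyshev recurrence $S_{m-2}(y)=yS_{m-1}(y)-S_m(y)$ to rewrite $S_{m-1}(y)-S_{m-2}(y)=S_m(y)-(y-1)S_{m-1}(y)$, so that the Riley equation $\phi_K(s,y)=0$ of Proposition~\ref{Riley}(ii) reads
$$S_{n-2}(z)=\big[\,1-(y-s^2-s^{-2})S_{m-1}(y)\big(S_m(y)-(y-1)S_{m-1}(y)\big)\,\big]\,S_{n-1}(z).$$
Multiplying through by $S_{n-1}(z)$ and substituting $X=S^2_{n-1}(z)$ gives the stated expression for $Y$; since everything involved is an identity of rational functions in $s$ and $y$, there is no issue with special values.

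I do not expect a serious obstacle here. The one genuinely conceptual point is the vanishing $f(w)=0$, which is what makes $\Phi\big(\delta_{n-1}(w^{-1})\big)$ literally a value of Proposition~\ref{prop1} with the variable $t$ playing no role; after that the argument is a single substitution followed by two invocations of Proposition~\ref{Riley}. The most error-prone part is the bookkeeping in passing from $M$ to $\rho(w)^{-1}$ — in particular the sign changes $f\mapsto -w_{12}$, $g\mapsto (y-2)w_{12}$ and the swap $e\leftrightarrow h$ that turns $(e-h)$ into $(w_{22}-w_{11})$ — so I would be especially careful to match indices there.
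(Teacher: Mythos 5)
Your proposal is correct and follows essentially the same route as the paper: apply Proposition~\ref{prop1} to $M=\rho(w)^{-1}$ (noting that the exponent sum of $w$ is zero, so no power of $t$ appears), and then identify $X=S^2_{n-1}(z)$ and $Y=S_{n-1}(z)S_{n-2}(z)$ via Proposition~\ref{Riley}(iii) and the Riley equation in Proposition~\ref{Riley}(ii). Your entry-by-entry substitution $(e,f,g,h,\mu)=(w_{22},-w_{12},(y-2)w_{12},w_{11},z)$ matches the paper's computation exactly, and your explicit remark that $f(w)=0$ makes precise a point the paper leaves implicit.
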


\begin{proof}
Recall that $\rho(w^{-1})=\left[ \begin{array}{cc}
w_{22} & -w_{12} \\
(y-2)w_{12} & w_{11} \end{array} \right]$, where $w_{ij}$ are computed in Proposition \ref{Riley}(i), and $z=\tr \rho(w^{-1})$. Since $\phi_K(s,y)=0$, by Proposition \ref{Riley}(ii) we have
$$S_{n-1}(z)S_{n-2}(z) =  \big[ 1  - (y-s^2-s^{-2})S_{m-1}(y) \big( S_m(y) - (y-1)S_{m-1}(y) \big) \big] S^2_{n-1}(z).$$

By Proposition \ref{Riley}(iii) we have $$S^2_{n-1}(z) = \big[ (y-s^2-s^{-2})S^2_{m-1}(y) \left( 2-s^2-s^{-2} + (y-s^2-s^{-2})(y-2)S^2_{m-1}(y) \right) \big]^{-1}.$$
The proposition then follows by applying Proposition \ref{prop1}.
\end{proof}

\begin{proposition} \label{s2}
We have 
$$
\Phi((a^{-1}b)^m) = \left[ \begin{array}{ccc}
F_{11} & F_{12} & F_{13} \\
F_{21} & F_{22} & F_{23} \\
F_{31} & F_{32} & F_{33}\end{array} \right],
$$
where 
\begin{eqnarray*}
F_{11} &=& \big( S_{m}(y) - S_{m-1}(y) \big)^2,\\
F_{12} &=& 2s^{-1}S_{m-1}(y) \big( S_{m}(y) - S_{m-1}(y) \big),\\
F_{13} &=& -s^{-2} S^2_{m-1}(y),\\
F_{21} &=& s(y-2) S_{m-1}(y) \big( S_{m}(y) - S_{m-1}(y),\\
F_{22} &=& \big( S_{m}(y) - S_{m-1}(y) \big) \big( S_{m}(y) - (y-1)S_{m-1}(y) \big) + (y-2) S^2_{m-1}(y),\\
F_{23} &=& -s^{-1}S_{m-1}(y) \big( S_{m}(y) - (y-1)S_{m-1}(y) \big),\\
F_{31} &=& -s^2(y-2)^2 S^2_{m-1}(y),\\
F_{32} &=& -2s(y-2)S_{m-1}(y) \big( S_{m}(y) - (y-1)S_{m-1}(y) \big),\\
F_{33} &=& \big( S_{m}(y) - (y-1)S_{m-1}(y) \big)^2.
\end{eqnarray*}
\end{proposition}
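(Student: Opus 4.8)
The plan is to compute the single matrix $\rho(a^{-1}b)$, identify $\Phi((a^{-1}b)^m)$ with a power of its adjoint, and then invoke Proposition \ref{prop2}.

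First I would compute $\rho(a^{-1}b)$. From $\rho(a)^{-1} = \left[ \begin{array}{cc} s^{-1} & -1 \\ 0 & s \end{array} \right]$ and the given form of $\rho(b)$ one gets, by a single matrix multiplication,
$$\rho(a^{-1}b) = \left[ \begin{array}{cc} y-1 & -s^{-1} \\ s(2-y) & 1 \end{array} \right],$$
which lies in $SL_2(\BC)$ and has trace $y$. Next I would note that since $a$ and $b$ are both meridians, the abelianization $f$ sends $a^{-1}b$ to $t^{-1}t = 1$, so no power of $t$ occurs and $\Phi((a^{-1}b)^m) = \big( Ad_{\rho(a^{-1}b)} \big)^m$ is a constant $3 \times 3$ matrix over $\BC$ (with respect to the basis $\{E,H,F\}$ of $sl_2(\BC)$ used in Proposition \ref{prop1}).

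Finally I would apply Proposition \ref{prop2} to the matrix $M = \rho(a^{-1}b)$, i.e.\ with $e = y-1$, $f = -s^{-1}$, $g = s(2-y)$, $h = 1$, $\mu = y$, and exponent $m$. Substituting these values and simplifying with $S_m(y) - h S_{m-1}(y) = S_m(y) - S_{m-1}(y)$, $S_m(y) - e S_{m-1}(y) = S_m(y) - (y-1)S_{m-1}(y)$, and $fg = (-s^{-1})\, s(2-y) = y-2$, each entry $D_{ij}$ turns into the corresponding $F_{ij}$; for instance $D_{13} = -f^2 S^2_{m-1}(y) = -s^{-2} S^2_{m-1}(y) = F_{13}$ and $D_{22} = (S_m(y) - S_{m-1}(y))(S_m(y) - (y-1)S_{m-1}(y)) + (y-2)S^2_{m-1}(y) = F_{22}$. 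I do not expect any genuine obstacle here: the substance is already contained in the diagonalization argument behind Propositions \ref{prop1} and \ref{prop2}, and the only points needing care are the sign of the lower-left entry $g = s(2-y)$ and the observation that the $t$-powers cancel, so that $\Phi((a^{-1}b)^m)$ has no $t$-dependence at all.
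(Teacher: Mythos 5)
Your proposal is correct and follows the same route as the paper: compute $\rho(a^{-1}b)=\left[\begin{smallmatrix} y-1 & -s^{-1} \\ -s(y-2) & 1\end{smallmatrix}\right]$ with trace $y$, note that the abelianization kills the $t$-powers, and substitute into Proposition \ref{prop2}; all the entry identifications you list check out. The only thing you add beyond the paper's one-line proof is the explicit remark that $f(a^{-1}b)=1$, which the paper leaves implicit.
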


\begin{proof}
Note that $\rho(a^{-1}b) = \left[ \begin{array}{cc}
y-1 & -s^{-1} \\
-s(y-2) & 1 \end{array} \right]$ and $\tr \rho(a^{-1}b)=y$. The proposition follows by applying Proposition \ref{prop2}.
\end{proof}

\begin{proposition} \label{s3}
We have
$$\Phi(\delta_{m-1}(ab^{-1})) = \frac{1}{y^2-4}\left[ \begin{array}{ccc}
G_{11} & G_{12} & G_{13} \\
G_{21} & G_{22} & G_{23} \\
G_{31} & G_{32} & G_{33}\end{array} \right],$$ 
where
\begin{eqnarray*}
G_{11} &=& (y-2) \big[ 2m + 2 S_m(y) S_{m-1}(y) - y S^2_{m-1}(y) \big],\\
G_{12} &=& 2s(y-2) \big[ m + S_m(y) S_{m-1}(y) - (y+1) S^2_{m-1}(y) \big],\\
G_{13} &=& s^2 \big[ 2m -y S_m(y) S_{m-1}(y) + (y^2-2) S^2_{m-1}(y) \big],\\
G_{21} &=& s^{-1}(y-2)^2 \big[ m +  S_m(y) S_{m-1}(y) - (y+1) S^2_{m-1}(y) \big],\\
G_{22} &=& (y-2) \big[ (y-2)m + 2y S_m(y) S_{m-1}(y) - (2y^2-4) S^2_{m-1}(y) \big],\\
G_{23} &=& s(y-2) \big[ m -(y+1) S_m(y) S_{m-1}(y) +(y^2+y-1) S^2_{m-1}(y) \big],\\
G_{31} &=& s^{-2}(y-2)^2 \big[ 2m -y S_m(y) S_{m-1}(y) + (y^2-2) S^2_{m-1}(y) \big],\\
G_{32} &=& 2s^{-1}(y-2)^2 \big[ m - (y+1) S_m(y) S_{m-1}(y) + (y^2+y-1) S^2_{m-1}(y) \big],\\
G_{33} &=& (y-2) \big[ 2m + (y^2-2) S_m(y) S_{m-1}(y) - (y^3-3y) S^2_{m-1}(y) \big].
\end{eqnarray*}
\end{proposition}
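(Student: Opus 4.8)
The plan is to recognize $\Phi(\delta_{m-1}(ab^{-1}))$ as a power sum of an adjoint matrix and then quote Proposition \ref{prop1}. First I would record $\rho(b^{-1})=\left[\begin{array}{cc} s^{-1} & 0 \\ y-2 & s\end{array}\right]$, so that
$$\rho(ab^{-1})=\left[\begin{array}{cc} y-1 & s \\ s^{-1}(y-2) & 1\end{array}\right],$$
whose trace is $y$, consistent with $y=\tr\rho(ab^{-1})$. Since $ab^{-1}$ maps to $t^{1-1}=1$ under the abelianization $f$, the ring homomorphism $\Phi$ sends $ab^{-1}$ to $\widetilde{\rho}(ab^{-1})\otimes\widetilde{f}(ab^{-1})=Ad_{\rho(ab^{-1})}$, carrying no $t$-factor. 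Hence $\Phi(\delta_{m-1}(ab^{-1}))=\sum_{i=0}^{m-1}\big(Ad_{\rho(ab^{-1})}\big)^i$, and Proposition \ref{prop1} applies verbatim with $n$ replaced by $m$.

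Concretely, I would invoke Proposition \ref{prop1} with $M=\rho(ab^{-1})$, i.e. $e=y-1$, $f=s$, $g=s^{-1}(y-2)$ (so $fg=y-2$), $h=1$, $\mu=y$, together with $X=S^2_{m-1}(y)$ and $Y=S_{m-1}(y)S_{m-2}(y)$. Substituting these data into the formulas for $C_{11},\dots,C_{33}$ of Proposition \ref{prop1} immediately gives $\Phi(\delta_{m-1}(ab^{-1}))=\frac{1}{y^2-4}[C_{ij}]$, with each $C_{ij}$ now a polynomial in $s,s^{-1},y$ and the Chebychev polynomials. The only remaining task is to match these expressions with $G_{11},\dots,G_{33}$.

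The matching is routine but lengthy; the one structural ingredient is the Chebychev recurrence written as $S_{m-2}(y)=yS_{m-1}(y)-S_m(y)$, which eliminates $S_{m-2}(y)$ from the entries. For instance it yields
$$yX-2Y=yS^2_{m-1}(y)-2S_{m-1}(y)S_{m-2}(y)=2S_m(y)S_{m-1}(y)-yS^2_{m-1}(y),$$
$$X+Y=S_{m-1}(y)\big(S_{m-1}(y)+S_{m-2}(y)\big)=(y+1)S^2_{m-1}(y)-S_m(y)S_{m-1}(y),$$
and analogous identities for $2X-yY$, $2X-(\mu^2-2)Y$, and so on. Feeding these into each $C_{ij}$, and then collecting the common factor $(y-2)$ on the diagonal and the appropriate power of $s$ off the diagonal, produces exactly the stated $G_{ij}$. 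I expect the bookkeeping across the nine entries — in particular keeping the $s^{\pm1}$ factors and the sign conventions consistent — to be the only real obstacle; conceptually this is identical to the proofs of Propositions \ref{s1} and \ref{s2}.
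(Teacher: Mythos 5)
Your proposal is correct and follows exactly the paper's route: the paper's proof likewise just records $\rho(ab^{-1})=\left[\begin{smallmatrix} y-1 & s \\ s^{-1}(y-2) & 1\end{smallmatrix}\right]$ with trace $y$, invokes Proposition \ref{prop1} (with $n$ replaced by $m$), and uses $S_{m-2}(y)=yS_{m-1}(y)-S_m(y)$ to eliminate $S_{m-2}$ from the entries. Your spelled-out identities such as $yX-2Y=2S_m(y)S_{m-1}(y)-yS^2_{m-1}(y)$ and $X+Y=(y+1)S^2_{m-1}(y)-S_m(y)S_{m-1}(y)$ check out against the stated $G_{ij}$, so the only difference is that you make explicit the bookkeeping the paper leaves implicit.
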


\begin{proof}
Note that $\rho(ab^{-1}) = \left[ \begin{array}{cc}
y-1 & s \\
s^{-1}(y-2) & 1 \end{array} \right]$ and $S_{m-2}(y) = y S_{m-1}(y) - S_m(y)$. The proposition follows by applying Proposition \ref{prop1}.
\end{proof}

Since $\det \Phi(w)=1$, by Lemma \ref{r/a} we have
$$
\det \Phi(\frac{\partial r}{\partial a}) =  \det \Phi \Big( 1+(1-a)\delta_{n-1}(w^{-1})  (a^{-1}b)^m (b^{-1} -1) \delta_{m-1}(ab^{-1}) \Big).
$$

With the following formulas
\begin{eqnarray*} 
\Phi(1-a) &=& \left[ \begin{array}{ccc}
1-s^2t & 2st & t \\
0 & 1-t & - s^{-1}t \\
0 & 0 & 1- s^{-2}t\end{array} \right],\\
\Phi(\delta_{n-1}(w^{-1})) &=& \frac{1}{z^2-4}\left[ \begin{array}{ccc}
E_{11} & E_{12} & E_{13} \\
E_{21} & E_{22} & E_{23} \\
E_{31} & E_{32} & E_{33}\end{array} \right] \qquad (\text{by Proposition } \ref{s1}),\\
\Phi((a^{-1}b)^m) &=& \left[ \begin{array}{ccc}
F_{11} & F_{12} & F_{13} \\
F_{21} & F_{22} & F_{23} \\
F_{31} & F_{32} & F_{33}\end{array} \right] \qquad  \qquad \quad \, \, (\text{by Proposition } \ref{s2}),\\
\Phi(b^{-1} -1) &=& \left[ \begin{array}{ccc}
-1 + s^{-2}t^{-1} & 0 & 0 \\
(2-y)s^{-1}t^{-1} & -1+t^{-1} & 0 \\
-(y-2)^2t^{-1} & 2s(y-2)t^{-1} & -1 + s^{2}t^{-1}\end{array} \right],\\
\Phi(\delta_{m-1}(ab^{-1})) &=& \frac{1}{y^2-4}\left[ \begin{array}{ccc}
G_{11} & G_{12} & G_{13} \\
G_{21} & G_{22} & G_{23} \\
G_{31} & G_{32} & G_{33}\end{array} \right] \qquad (\text{by Proposition } \ref{s3}),
\end{eqnarray*} and the help of Mathematica, we obtain
\begin{eqnarray*}
\det \Phi(\frac{\partial r}{\partial a}) &=& \frac{(t-1)^2(t-s)(t-s^{-1})}{(y-s^2-s^{-2}) \big( 2-s^2-s^{-2}  + (y-2)(y-s^2-s^{-2}) S^2_{m-1}(y) \big)} \\
&& \times \left( mn t^2 - \frac{A'(s^4+s^{-4}) + B'(s^2+s^{-2})+C'}{4   + (y-2)(y-s^2-s^{-2}) S^2_{m-1}(y)} \, t + mn\right),
\end{eqnarray*}
where
\begin{eqnarray*}
A' &=& 2n \Big( \frac{2m +2 S_m(y)S_{m-1}(y) - y S^2_{m-1}(y)}{y+2} \Big),\\
B' &=& 8n \Big( \frac{2m +2 S_m(y)S_{m-1}(y) - y S^2_{m-1}(y)}{y+2} \Big) \\
&& - \, 4 \big( S_m(y) + S_{m-1}(y) \big)^2  \Big( \frac{m S_{m}(y) + (m+1) S_{m-1}(y)}{(y+2) S_{m-1}(y)} \Big)\\
&& + \, 6m S^2_m(y)+(4+4m-4n-2my) S_{m}(y) S_{m-1}(y)\\
&& + \,  (2+2m-4mn-y+2ny+2mny) S^2_{m-1}(y),\\
C' &=& 12n \Big( \frac{2m +2 S_m(y)S_{m-1}(y) - y S^2_{m-1}(y)}{y+2} \Big) \\
&& - \, 8 \big( S_m(y) + S_{m-1}(y) \big)^2  \Big( \frac{m S_{m}(y) + (m+1) S_{m-1}(y)}{(y+2) S_{m-1}(y)} \Big)\\
&& + \, 12m S^2_m(y)+(4+4m-4n-2my) S_{m}(y) S_{m-1}(y)\\
&& + \,  (4+4m-8mn-2y+4ny+4mny) S^2_{m-1}(y) \\
&& - \, (2mn-1)  \big( 2S_m(y) - yS_{m-1} (y) \big)^2.
\end{eqnarray*}
Here we should remark that we used the formula $S^2_m(y) - y S_m(y) S_{m-1}(y) + S^2_{m-1}(y)=1$ to greatly simplify the computations in Mathematica.

Since $s+s^{-1}=x$, $s^2+s^{-2}=x^2-2$ and $s^4+s^{-4}=x^4-4x^2+2$ we get
\begin{eqnarray*}
\Delta_K^{Ad \circ \rho}(t) &=& \frac{t-1}{(y+2-x^2) \big( 4 - x^2  + (y-2)(y+2-x^2) S^2_{m-1}(y) \big)} \\
&& \times \left( mn t^2 - \frac{Ax^4 + Bx^2+C}{4   + (y-2)(y+2-x^2) S^2_{m-1}(y)} \, t + mn\right),
\end{eqnarray*}
where $A=A'$, $B=B'-4A'$ and $C=2A'-2B'+C'$. 

This completes the proof of Theorem \ref{thm-atp}.

\section{Acknowlegements} This work was partially supported by a grant from the Simons Foundation (\#354595 to Anh Tran).

\end{document}